\documentclass[12pt]{amsart}
\usepackage{amsthm,amsfonts,fullpage}
\usepackage{color}

\input{xy}
\xyoption{all}
\xyoption{matrix}

\theoremstyle{plain}
\newtheorem{thm}{Theorem}
\newtheorem{teo}{Theorem}
  \newtheorem{lem}[thm]{Lemma}
  \theoremstyle{remark}
  \newtheorem{rem}[thm]{Remark}
 
\theoremstyle{definition}
  \newtheorem{example}[thm]{Example}
 \theoremstyle{definition}
  
  \theoremstyle{plain}
  \newtheorem{prop}[thm]{Proposition}
  \theoremstyle{plain}
  \newtheorem{cor}[thm]{Corollary}

\def\t{\triangleleft}

\def\sub{\underline}
\def\ot{\otimes}
\def\wh{\widehat}
\def\wt{\widetilde}
\def\Hom{\mathrm{Hom}}
\def\ignore#1{}

\def\Id{\mathrm{Id}}
\newcommand{\mop}[1]{\mathop{\hbox {\rm #1} }\nolimits}
\title{Bialgebraic proof of the existence of cup product in the cohomology of
racks and quandles}
\author{Marco Farinati}
\address{Dto. de Matem\'atica FCEyN, Universidad de Buenos Aires - IMAS Conicet}
\author{Dominique Manchon}
\address{LMBP - UMR 6620,
	CNRS-Universit\'e Clermont-Auvergne,
	3 place Vasar\'ely, CS 60026,
        63178 Aubi\`ere, France.}
        \email{manchon@math.univ-bpclermont.fr}
        \urladdr{http://recherche.math.univ-bpclermont.fr/~manchon/}
\author{Simon Covez}
\address{Laboratoire de Math\'ematiques Jean Leray, Universit\'e de Nantes, 2 chemin de la Houssini\`ere, 44322 Nantes}
\email{simon.covez@gmail.com}
\begin{document}\maketitle
\begin{abstract}
We retrieve the graded commutative algebra structure of rack and quandle cohomology by purely algebraic means.
\end{abstract}
\vskip 12mm
\noindent\textbf{Keywords:} rack, quandle, cup-product, differential bialgebra, homotopy.

\noindent\textbf{MSC classification:} 16T10, 55N35, 57T05.
\section*{Introduction}

The purpose of this note is to point out to a natural differential graded (associative) bialgebra attached to
any rack $X$, that governs algebraic structures on the rack homology and cohomology of $X$. More precisely,
we prove that this bialgebra induces a structure of differential graded commutative algebra on the complex
computing rack cohomology. Cup product on rack cohomology was discovered and described in topological terms by
Clauwens \cite{Cl}: we prove that our purely algebraic construction gives back
Clauwens' cup product. \\

\noindent\textbf{Acknowledgements:} M. F. is Research member of Conicet, partially supported by 
PIP 112-200801 -00900,
PICT 2006 00836, UBACyT X051
and MathAmSud 10-math-01 OPECSHA.

\section{Basic definitions}
A rack is a nonempty set $X$ together with a binary operation $\t:X\times X\to X$: 
$(x,y)\mapsto x\t y$ (sometimes denoted also by $x\t y=x^y$) satisfying the following two axioms:
\begin{itemize}
\item[R1] $-\t y:X\to X$ is a bijection for all $y\in X$, and
\item[R2] $(x\t y)\t z=(x\t z)\t (y\t z)$, for all $x,y,z\in X$.
\end{itemize}
In exponential notation, the second axiom reads 
$(x^y)^z=\left(x ^z\right)
^{\left(y ^z\right)}$. \\

The first family of examples is to take $X$ a group, and $x\t y=y ^{-1}xy$. A rack satisfying
$x\t x=x$ for all $x\in X$, as this example does, is called a Quandle. We refer
\cite{AG} and references therein for several examples and a brief history of racks and quandles.\\

Let $k$ be a commutative ring with $1$, and define $C_n(X,k)=k[X^n]$= the free $k$-module with basis $X^n$, and
$C ^n(X,k)=k^{X  ^n}\cong \Hom(C_n(X,k),k)$ with differentials $\partial:C_n(X,k)\to C_{n-1}(X,k)$ defined by
\begin{equation}\label{bord-racks}
\partial(x_1,\cdots,x_n)=
\sum_{i=1}^{n}(-1)^i(x_1,\cdots,\wh{x_i}, \cdots ,x_n)
-\sum_{i=1}^{n}(-1)^i(x_1^{x_i},\cdots,x_{i-1}^{x_i},\wh{x_i}, \cdots ,x_n)
\end{equation}
where $\wh x_i$ means that this element was omitted. For cohomology,
the differential is $\partial ^*:C^n\to C ^{n+1}$.
These maps are of square zero (by direct computation, or see remark \ref{rem2} later) and define respectively the homology and cohomology
of the rack $X$ with coefficients in $k$. The introduction of the algebraic objects in the next section is aimed to provide a simple proof that $(C  ^\bullet(X,k),\partial^*)$ is a differential graded algebra.
\section{Algebra and d.g. bialgebra associated to a rack}
Fix a rack $X$ and a commutative ring $k$ with unit. 
Let $A(X)$ (denoted simply by $A$ if $X$ is understood) the quotient of the free $k$-algebra on generators $X$
modulo the ideal generated by elements of the form $yx ^y-xy$:
\[
A=k\langle X\rangle/\langle yx^y-xy\rangle
\]
It can be easily seen that
$A$ is a $k$-bialgebra declaring $x$ to be grouplike for any $x\in X$, since $A$ agrees with the semigroup algebra on the monoid
freely generated by $X$ with relations $yx^y\sim xy$.
If one considers the group $G_X$ freely generated by
$X$ with relations $yx=xy^x$, then $k[G_X]$ is the (non commutative) localization of $A$, where one has inverted the
elements of $X$.
An example of $A$-bimodule that will be used later, which is actually a $k[G_X]$-module, is
 $k$ with $A$-action determined on generators by
\[
x\lambda y=\lambda, \ \forall x,y\in X
\]
We define $B(X)$ (also denoted by $B$) as the algebra freely generated by two copies of $X$, with the following relations:
\[
B=k\langle x, e_y: x,y\in X\rangle/\langle yx^y-xy,ye_{x^y}-e_xy\rangle
\]
The key result is the following:
\begin{teo}\label{dgb}
 $B$ is a differential graded bialgebra.
\end{teo}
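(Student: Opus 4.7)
Proof plan. The plan is to introduce all the structure maps on generators, use the universal property of the free algebra to extend, and then check that each structure map descends to the quotient defining $B$ and that the compatibility axioms hold.

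First I would fix the grading by declaring $\deg(x)=0$ for $x\in X$ and $\deg(e_y)=1$ for $y\in X$, extended multiplicatively. Both defining relations $yx^y-xy$ and $ye_{x^y}-e_xy$ are homogeneous (of degrees $0$ and $1$ respectively), so $B$ inherits the grading. On generators I would define
\[
d(x)=0,\qquad d(e_y)=y-1,
\]
extended as a graded derivation of degree $-1$ on the free algebra $k\langle X,e_Y\rangle$, and put the counit $\varepsilon(x)=1$, $\varepsilon(e_y)=0$ and the coproduct
\[
\Delta(x)=x\ot x,\qquad \Delta(e_y)=e_y\ot 1+y\ot e_y,
\]
extending $\Delta$ as a graded algebra map into $B\ot B$ equipped with the Koszul-signed multiplication $(a\ot b)(c\ot d)=(-1)^{|b||c|}ac\ot bd$.

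Next I would check well-definedness on the quotient. The relation $yx^y-xy$ is $d$-closed since its generators have degree zero, and $d(ye_{x^y}-e_xy)=y(x^y-1)-(x-1)y=yx^y-xy=0$ in $B$; for $\varepsilon$ both relations are trivially sent to $0$; for $\Delta$ one computes, using that $x,y,x^y$ are grouplike and that $|y|=0$ so the Koszul signs are trivial,
\[
\Delta(ye_{x^y})=ye_{x^y}\ot y+yx^y\ot ye_{x^y},\qquad
\Delta(e_xy)=e_xy\ot y+xy\ot e_xy,
\]
which agree modulo the two defining relations of $B$. Hence $d$, $\Delta$ and $\varepsilon$ descend.

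The remaining compatibilities I would check on generators, appealing to the standard fact that if $d$ is a graded derivation and $\Delta$ a graded algebra map, then both $d^2$ and the failure of coassociativity and of the coderivation identity $\Delta d=(d\ot\Id+\Id\ot d)\Delta$ are themselves derivations (respectively morphism-compatible expressions) that vanish everywhere as soon as they vanish on generators. On generators one has $d^2(x)=0$ and $d^2(e_y)=d(y-1)=0$; coassociativity is immediate from grouplikeness on $x$, and for $e_y$ both iterated coproducts produce $e_y\ot 1\ot 1+y\ot e_y\ot 1+y\ot y\ot e_y$; finally
\[
\Delta d(e_y)=\Delta(y-1)=y\ot y-1\ot 1,
\]
while
\[
(d\ot\Id+\Id\ot d)(e_y\ot 1+y\ot e_y)=(y-1)\ot 1+y\ot(y-1)=y\ot y-1\ot 1,
\]
using the sign $(-1)^{|y|}=1$ in the second summand.

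The main obstacle is purely bookkeeping: making sure the Koszul signs in $B\ot B$ are handled correctly when verifying that $\Delta$ kills $ye_{x^y}-e_xy$ and that $d$ is a graded coderivation. Because all signs arising from the generator $y$ are trivial (degree zero) and the only odd generator $e_y$ is primitive up to the grouplike correction, the sign verifications reduce to the identities displayed above. Once these are in place, the extension to all of $B$ is automatic by the derivation/algebra-morphism principle, completing the proof.
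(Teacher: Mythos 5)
Your proof is correct and follows essentially the same route as the paper: grade by $|x|=0$, $|e_y|=1$, extend $d$ as a degree $-1$ derivation and $\Delta$ as an algebra map into the Koszul-signed tensor product, check that both preserve the ideal generated by the two defining relations, and verify the coderivation identity on generators, using the standard derivation/algebra-morphism extension principle. The only deviations are conventions: your $d(e_y)=y-1$ is the negative of the paper's $d(e_x)=1-x$, and your $\Delta(e_y)=e_y\otimes 1+y\otimes e_y$ is the co-opposite $\tau\circ\Delta$ of the paper's $e_x\otimes x+1\otimes e_x$; both still give a valid differential graded bialgebra structure, so the theorem as stated is proved, though the later sections of the paper (the explicit coproduct formula and the comparison with Clauwens' cup product) rely on the paper's specific choice.
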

By differential graded bialgebra we mean that the differential is both a derivation with respect to multiplication, and 
coderivation with respect to comultiplication.
\begin{proof}

The grading is given by declaring $|e_x|=1$ and $|x|=0$ for all $x\in X$. Since the relations are homogeneous, $B$ is a graded algebra.
Moreover, define $d:B_\bullet\to B_{\bullet-1}$ the unique (super) derivation of degree -1 determined by
\[
d(e_x)=1-x,\ d(x)=0,\ \forall x\in X
\]
In order to see that $d$ is well-defined, one must check that the relations
$yx^y\sim xy$ and $ye_{x^y}\sim e_xy$ are compatible with $d$. The first relation
is easier since
\[
d(yx^y-xy)=
d(y)x^y+yd(x^y)-d(x)y-xd(y)=0+0-0-0=0
\]
For the second relation:
\[
d(ye_{x^y}-e_xy)=
yd(e_{x^y})-d(e_x)y=
y(1-x^y)-(1-x)y=
y-yx^y-y+xy=xy-yx^y
\]
So we see that the ideal of relations defining $B$ is stable by $d$.
It is clear that $d ^2=0$, since $d^2$ is zero on generators, hence we get a structure of differential graded algebra on $B$. The comultiplication in $B$ is defined by
\[
\begin{array}{rrcl}
\Delta:&B&\longrightarrow&B\ot B\\
&x&\mapsto& x\ot x\\
&e_x&\mapsto& e_x\ot x+1\ot e_x
\end{array}\]
and  extended multiplicatively, using the standard $k$-algebra structure
on the tensor product (in the graded sense, with Koszul signs taken into account). Notice that $B$ is not cocommutative. We need to check first that $\Delta$ is well defined. The first relation is the easiest since
elements of $X$ are group-like:
\[
\begin{array}{ccl}
\Delta(xy)-\Delta(yx^y)
&=&
(x\ot x)(y\ot y)-
(y\ot y)(x^y\ot x ^y)
\\
=
xy\ot xy-yx^y\ot yx^y
&=&
xy\ot (xy-yx^y)+(xy-yx^y)\ot yx^y.
\end{array}
\]
For the second relation we check:
\[
\begin{array}{rcl}
\Delta(ye_{x^y}-e_xy)&
=&
(y\ot y)(e_{x^y}\ot x^y+1\ot e_{x^y})
-(e_x\ot x+1\ot e_x)(y\ot y)\\
&=&
ye_{x^y}\ot yx^y+y\ot ye_{x^y}
-e_xy\ot xy-y\ot e_xy\\
&=&
(ye_{x^y}-e_xy)\ot yx^y
+
e_xy\ot (yx^y-xy)
+y\ot (ye_{x^y}- e_xy),
\end{array}
\]
so we see that the ideal defining the relations is also a coideal. One checks then that $d$ is a (super) co-derivation: in order to do that, it is enough
to check that on generators:
\begin{eqnarray*}
\Delta(d(x))&=&\Delta(0)=0=d(x)\ot x+x\ot d(x)=(d\ot 1+1\ot d)(\Delta x),\\
\Delta(d(e_x))&=&\Delta(1-x)=1\ot 1-x\ot x.
\end{eqnarray*}
On the other hand,
\begin{eqnarray*}
(d\ot 1+1\ot d)(\Delta e_x)&=&
(d\ot 1+1\ot d)(e_x\ot x+1\ot e_x)\\
&=&
(1-x)\ot x+1\ot (1-x)=
1\ot x-x\ot x+1\ot 1-1\ot x\\
&=&
1\ot 1-x\ot x.
\end{eqnarray*}
This ends up the proof of Theorem \ref{dgb}.
\end{proof}

 \begin{example}
\label{exDelta}
$\Delta(e_xe_y)=e_xe_y\ot xy+1\ot e_xe_y+
e_x\ot xe_y
-e_y\ot ye_{x^y}$ because
$\Delta(e_xe_y)$ is by definition $\Delta(e_x)\Delta(e_y)$ in $B\ot B$, and this is equal to
\begin{eqnarray*}
(e_x\ot x+1\ot e_x)(e_y\ot y+1\ot e_y)
&=&e_xe_y\ot xy+ e_x\ot xe_y
-e_y\ot e_xy+1\ot e_xe_y\\
&=&e_xe_y\ot xy+ e_x\ot xe_y
-e_y\ot ye_{x^y}+1\ot e_xe_y.
\end{eqnarray*}
\end{example}
\noindent The relation between $B$ and the homology and cohomology is given in the following:
\begin{lem}\label{Bvscomplejo}
There is an isomorphism of left $A$-modules $B\cong A\ot_kTX$ where $TX$ is the free unital algebra generated by $X$. It induces an isomorphism of complexes
\[
 (C_\bullet,\partial)\cong (k\ot_AB_\bullet,\Id_k\ot_Ad)\hbox{, and }(C^\bullet,\partial^*)\cong (\Hom_{A-}(B,k),d^*)
\]
 where
$\Hom_{A-}$ means that the left $A$-structure is used to compute Hom.
\end{lem}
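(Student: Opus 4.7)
The strategy is to build explicit mutually inverse maps between $B$ and $A \ot_k TX$, and then pass to complexes formally. One direction is immediate: $\psi \colon A \ot_k TX \to B$ defined by $\psi(a \ot e_{x_1} \cdots e_{x_n}) := a\, e_{x_1} \cdots e_{x_n}$ is plainly left $A$-linear, and surjective since its image contains all generators of $B$. The substantive step is injectivity, which I would obtain by constructing a left $B$-module structure on $A \ot_k TX$ and taking $\phi(b) := b \cdot (1 \ot 1)$ as the inverse.

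To set up this $B$-action, observe that rack axiom R2, written in exponential notation as $(z^x)^y = (z^y)^{x^y}$, is exactly the compatibility needed so that the right action $z \cdot y := z^y$ of the free monoid $X^*$ on $X$ descends to a right action of the monoid underlying $A$; write $x^a$ for this action of $a \in A$ on $x \in X$. Then define
\[
x \cdot (a \ot t) := (xa) \ot t, \qquad e_x \cdot (a \ot e_{y_1} \cdots e_{y_m}) := a \ot e_{x^a} e_{y_1} \cdots e_{y_m}
\]
on generators, and check that the relations $yx^y \sim xy$ and $ye_{x^y} \sim e_x y$ of $B$ are respected: the first is immediate from $yx^y = xy$ in $A$, while the second reduces to the associativity identity $(x^y)^a = x^{ya}$ of the right $A$-action on $X$. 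The resulting $\phi$ is then automatically left $A$-linear, and the two equalities $\phi \circ \psi = \Id$ and $\psi \circ \phi = \Id$ are routine; the latter is cleanest via the observation that $\psi$ is itself left $B$-linear (with $B$ acting on itself by left multiplication), a check that reduces to the identity $e_x a = a e_{x^a}$ in $B$, proved by induction on the length of a monomial $a$ using $e_x y = y e_{x^y}$.

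Given this first isomorphism, the isomorphism of complexes follows essentially formally. The grading $|x|=0$, $|e_x|=1$ on $B$ matches the tensor-degree grading on $TX$, so in degree $n$ one obtains $(k \ot_A B)_n \cong k \ot T^n X \cong k[X^n] = C_n$ via $1 \ot e_{x_1} \cdots e_{x_n} \leftrightarrow (x_1, \ldots, x_n)$, and dually $\Hom_{A-}(B_n, k) \cong k^{X^n} = C^n$. To check that $\Id_k \ot_A d$ matches $\partial$ (up to the global sign implicit in the chosen superderivation convention), expand via the super-Leibniz rule to get $d(e_{x_1} \cdots e_{x_n}) = \sum_i (-1)^{i-1} e_{x_1} \cdots e_{x_{i-1}}(1 - x_i) e_{x_{i+1}} \cdots e_{x_n}$ and pass to $k \ot_A B$: the $1$-summands produce the face-omission terms, while each $x_i$-summand, after iteratively applying $e_{x_j} x_i = x_i e_{x_j^{x_i}}$ to push $x_i$ all the way to the left (where the trivial $A$-action on $k$ absorbs it), becomes the conjugated term $(x_1^{x_i}, \ldots, x_{i-1}^{x_i}, x_{i+1}, \ldots, x_n)$ appearing in the formula for $\partial$. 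The main obstacle is establishing the $B$-module structure on $A \ot_k TX$: both R2 (for the descent of the right $X^*$-action to $A$) and associativity of this action (for the mixed relation of $B$ to be preserved) enter in an essential way.
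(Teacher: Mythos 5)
Your proposal is correct, and on the crucial point it is actually more complete than the argument in the paper, though by a genuinely different device. The paper proves the lemma by the rewriting observation alone: using $e_xy=ye_{x^y}$ every monomial can be brought to the canonical form $a\,e_{x_1}\cdots e_{x_n}$, and the isomorphism $B\cong A\ot_k TX$ is then simply read off from these canonical forms, with no separate verification that they are linearly independent; the rest of the paper's proof is the same differential computation you perform (pushing each $x_i$ to the left and letting the trivial action absorb it, with the same harmless global sign relative to \eqref{bord-racks}). You instead establish injectivity by endowing $A\ot_k TX$ with a left $B$-module structure, $x\cdot(a\ot t)=xa\ot t$ and $e_x\cdot(a\ot t)=a\ot e_{x^a}t$, and taking $\phi(b)=b\cdot(1\ot 1)$ as an explicit inverse to $\psi$. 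This is in effect a hand-made diamond-lemma argument, and it has the merit of isolating exactly where the rack axioms enter: R2 is what makes the right action of the monoid underlying $A$ on $X$ well defined, and the associativity $(x^y)^a=x^{ya}$ of that action is what makes the mixed relation $ye_{x^y}=e_xy$ act consistently --- a point the paper leaves implicit. The paper's route is shorter and sufficient for readers willing to accept the normal-form claim; yours buys a self-contained proof that the canonical forms are a basis. One small imprecision: surjectivity of $\psi$ does not follow merely from its image containing the generators (since $\psi$ is not an algebra map); you need the image to be stable under left multiplication by $x$ and $e_x$, which is exactly your identity $e_xa=ae_{x^a}$, so the gap closes with what you already have --- just say so at that point.
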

\begin{proof}
It is clear from the relation $e_xy=ye_{x^y}$, that any noncommutative monomial
in the variables $y_1,\dots,y_k,e_{x_1},\dots,e_{x_n}$ may be written in the form
$y'_1y'_2\cdots y_k'e_{x'_1}\cdots
e_{x'_n}$,
for instance,  $xe_yze_t=xze_{y^z}e_t$. We may identify $TX$ with the subalgebra of $B$
generated by $\{e_x:x\in X\}$ and $A$ with the subalgebra of $B$ generated by
$\{x:x\in X\}$ and, written in ``canonical form'' we get the isomorphism
of left $A$-modules $B\cong A\ot TX$. This implies
$k\ot_AB\cong TX$ as $k$-modules, the isomorphism being
\[
1\ot e_{x_1}\cdots e_{x_n}\mapsto (x_1,\dots,x_n)
\]
In order to compute the differential, we use that $d$ is a superderivation:
\[
(1\ot d)(
1\ot e_{x_1}\cdots e_{x_n})
=\sum_{i=1}^ n(-1)^ {i+1}
1\ot e_{x_1}\cdots e_{x_{i-1}}d(e_{x_i})e_{x_{i+1}}\cdots e_{x_i}
\]
\[
=\sum_{i=1}^ n(-1)^ {i+1}
1\ot e_{x_1}\cdots e_{x_{i-1}}(1-x_i)e_{x_{i+1}}\cdots e_{x_i}
\]
\[
=\sum_{i=1}^ n(-1)^ {i+1}
1\ot e_{x_1}\cdots e_{x_{i-1}}e_{x_{i+1}}\cdots e_{x_i}
-
\sum_{i=1}^ n(-1)^ {i+1}1\ot e_{x_1}\cdots e_{x_{i-1}}x_ie_{x_{i+1}}\cdots e_{x_i}.
\]
Using the relation $e_xy=ye_x^ y$ and the triviality of the action  one gets
\[
1\ot e_{x_1}\cdots e_{x_{i-1}}x_ie_{x_{i+1}}\cdots e_{x_i}
=1\ot  x_i e_{x_1^{x_i}}\cdots e_{x_{i-1}^{x_i}}e_{x_{i+1}}\cdots e_{x_i}
\]
\[
=1 \cdot x_i\ot  e_{x_1^{x_i}}\cdots e_{x_{i-1}^{x_i}}e_{x_{i+1}}\cdots e_{x_i}
=1 \ot  e_{x_1^{x_i}}\cdots e_{x_{i-1}^{x_i}}e_{x_{i+1}}\cdots e_{x_i}
\]
that maps to
$(x_1^{x_i},\cdots,x_{i-1}^{x_i},\wh{x_i}, \cdots ,x_n)$, in agreement
with the definition of $\partial$.
\end{proof}

\begin{rem}\label{rem2}
 One may use this lemma to provide a  very simple proof that $\partial ^ 2=0$ in 
$C_\bullet(X,k)$.
\end{rem}

\noindent As a corollary, one gets the main result:

\begin{thm}\label{thm:main}
The complex $(C^\bullet(X,k),\partial^*)$ admits a strictly associative product and
$ \partial^*$ is a (super) derivation with respect to it.
\end{thm}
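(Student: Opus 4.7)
The plan is to transport the problem to $\Hom_{A-}(B,k)$ via Lemma \ref{Bvscomplejo}, and then convolve cochains using the coalgebra structure of $B$, in the same way that the linear dual of a (DG) Hopf algebra carries a natural (DG) algebra structure.

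Concretely, for homogeneous $f,g\in\Hom_{A-}(B,k)$ I would set
$$f\cup g := \mu_k\circ(f\ot g)\circ\Delta,$$
where $\mu_k:k\ot k\to k$ is multiplication and $f\ot g$ is taken with Koszul signs, so that in Sweedler notation $(f\cup g)(b)=\sum_{(b)}(-1)^{|g|\,|b_{(1)}|}f(b_{(1)})g(b_{(2)})$, with the convention that $f(b_{(1)})=0$ unless $|b_{(1)}|=|f|$. The crucial point making this an operation on $\Hom_{A-}(B,k)$ rather than on $\Hom_k(B,k)$ is that every generator $x\in X$ of $A$ is grouplike in $B$: therefore $\Delta$ is a map of left $A$-modules for the diagonal action on $B\ot B$, and since $A$ acts trivially on $k$, the multiplication $\mu_k$ is $A$-linear for the diagonal action on $k\ot k$. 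Composing, $f\cup g$ is $A$-linear.

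Strict associativity of $\cup$ then follows immediately from coassociativity of $\Delta$ and associativity of $\mu_k$, since both $(f\cup g)\cup h$ and $f\cup(g\cup h)$ unfold to the same composite: $(f\ot g\ot h)$ applied after the iterated coproduct $(\Delta\ot\Id)\circ\Delta=(\Id\ot\Delta)\circ\Delta$, followed by the triple multiplication in $k$. The graded Leibniz rule
$$d^*(f\cup g) = d^*(f)\cup g + (-1)^{|f|}f\cup d^*(g)$$
is the dual reformulation of the fact, established in Theorem \ref{dgb}, that $d$ is a graded coderivation, namely $\Delta\circ d=(d\ot\Id+\Id\ot d)\circ\Delta$ with Koszul signs. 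Transporting this product back along the isomorphism of Lemma \ref{Bvscomplejo} yields the desired strictly associative product on $(C^\bullet(X,k),\partial^*)$ with respect to which $\partial^*$ is a graded derivation.

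The main obstacle will be sign bookkeeping: one must track Koszul signs consistently through the definition of $f\ot g$, through the coderivation identity for $d$, and into the Leibniz rule for $d^*$. Conceptually nothing is new once Theorem \ref{dgb} and Lemma \ref{Bvscomplejo} are in hand, since the whole theorem reduces to the general principle that the $A$-linear dual of a DG bialgebra, with respect to a trivial action on $k$, is a DG algebra; but the signs must be verified carefully on generators of the form $e_{x_1}\cdots e_{x_n}$ to be sure the resulting cup product agrees with the one dictated by the differential $\partial^*$.
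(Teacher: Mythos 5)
Your proposal is correct and follows essentially the same route as the paper: the paper's construction, composing the restriction $i|$ of $\Hom_{A-}(B,k)^{\ot 2}\to\Hom_{A-}(B\ot^D B,k)$ with $\Delta^*$, is exactly your Koszul-signed convolution product $\mu_k\circ(f\ot g)\circ\Delta$, and the paper's key observation is the same one you identify, namely that grouplikeness of the elements of $A$ plus triviality of the $A$-action on $k$ makes everything $A$-linear, with associativity from coassociativity of $\Delta$ and the Leibniz rule dual to $d$ being a coderivation. No gaps to report.
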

\begin{proof}
Since the elements of $A\subset B$ are group-like, if we consider the $A$-diagonal
structure on $B\ot B$ (i.e. $x\cdot (b\ot b')=xb\ot xb'$ for all $x\in X$)
then $\Delta:B\to B\ot B$ is a morphism of $A$-modules. Let us denote $B\ot ^DB$
the $A$-module $B\ot B$ with this diagonal action. Using the $A$-module $k$, one can dualize
 the map $\Delta$ and get:
\[
\Delta^*:\Hom_{A-}(B\ot^DB,k)\to \Hom_{A-}(B,k)
\]
One also has the natural map $i:\Hom(B,k)\ot \Hom(B,k)\to \Hom(B\ot B,k)$ defined on homogeneous elements by $i(f\ot g)(b\ot b')=(-1)^{|g||b|}f(b)g(b')$. We claim
that if we consider the restriction $i|$ of $i$ to 
$\Hom_{A-}(B,k)\ot
\Hom_{A-}(B,k)$, then the image of $i|$ is contained in
$\Hom_{A-}(B\ot^DB,k)
\subset \Hom(B\ot B,k)$, namely, that we have a commutative diagram
\[
\xymatrix{ 
\Hom(B,k)\ot\Hom(B,k)\ar[r]^i&\Hom(B\ot^DB,k)\\
\Hom_{A-}(B,k)\ot\Hom_{A-}(B,k)\ar@{^(->}[u]\ar@{-->}[r]^-{i|}&\Hom_{A-}(B\ot^DB,k)\ar@{^(->}[u]
}
\]
In order to prove the claim, we consider $A$-linear maps  $f$ and $g$
from $B$ to $k$ (recall the action on $k$ is trivial, i.e. 
$x\lambda=\lambda$ for all $x\in X$, $\lambda\in k$).
Let us compute:
\[
i(f\ot g)\big(x\cdot (b\ot b')\big)
=i(f\ot g)(xb\ot xb')
=(-1)^{|b||g|}f(xb) g(xb')\]
\[
=(-1)^{|b||g|}\big(xf(b)\big)\big(x g(b')\big)
=(-1)^{|b||g|}f(b) g(b')
=(-1)^{|b||g|}x\big((f(b) g(b')\big)
=x\big(i(f\ot g)(b\ot b')\big).
\]
As a consequence, one can compose $i|$ with $\Delta^*$, and in this way we
define the multiplication
\begin{equation}\label{cup}
\smile:\Hom_{A-}(B,k)\ot \Hom_{A-}(B,k)\longrightarrow Hom_{A-}(B,k).
\end{equation}
This product $\smile$ is associative because 
$\Delta$ is coassociative,  $(B\ot ^DB)\ot ^DB=B\ot^D(B\ot^DB)$
and $i|$ is compatible with this equality.
Finally $\partial^*$ is a derivation because it identifies with $d^*$,
and $d$ is a coderivation with respect to $\Delta$ in $B$.
\end{proof}
\begin{example}
Let $f,g\in C^2(X,k)$, in order to compute $f\smile g$ one needs to compute
the summands in $\Delta(e_xe_ye_ze_t)$ with two tensors of type $e_{x_i}$ on each factor:
\[
\Delta(e_xe_ye_ze_t)=
\Delta(e_xe_y)\Delta(e_ze_t)=\]
\[=
(e_xe_y\ot xy+1\ot e_xe_y+
e_x\ot xe_y
+e_y\ot ye_{x^y})
(e_ze_t\ot zt+1\ot e_ze_t+
e_z\ot ze_t
+e_t\ot te_{z^t})
\]
\[
=e_xe_y\ot xye_ze_t+e_ze_t\ot e_xe_yzt
+e_x e_z\ot   xe_yze_t
+e_xe_t   \ot xe_yte_{z^t}
+e_y e_z\ot ye_{x^y}ze_t
+e_y  e_t\ot ye_{x^y}te_{z^t}+\cdots
\]
where the dots are terms in which  $f$ and $g$ vanish. Reordering the $e_{x_i}$'s on the right and the elements
of $A$ on the left we get
\[
=e_xe_y\ot xye_ze_t+e_ze_t\ot zx e_x^{zt}e_y  ^{zt}
+e_x e_z\ot   xze_{y^z}e_t
+e_xe_t   \ot xte_{y^t}e_{z^t}
+e_y e_z\ot yze_{x^{yz}}e_t
+e_y  e_t\ot yte_{x^{yt}}e_{z^t}+\cdots
\]
so finally 
$(f\smile g)(e_xe_ye_ze_t)$
is equal to
\[
f(e_xe_y)g(e_ze_t)+f(e_ze_t)g( e_x^{zt}e_y  ^{zt})
+f(e_x e_z)g(e_{y^z}e_t)\]
\[
+f(e_xe_t)g(e_{y^t}e_{z^t})
+f(e_y e_z)g(e_{x^{yz}}e_t)
+f(e_y  e_t)g(e_{x^{yt}}e_{z^t})
\]
This formula is to be compared with equation (23) of \cite{Cl}. A full explanation of this agreement is given in next section.
\end{example}
\section{An explicit expression for the coproduct}
We give here an explicit formula for $\Delta(e_{x_1}\cdots e_{x_n})$ for any $x_1,\ldots,x_n$ in the rack $X$, thus generalizing Example \ref{exDelta}. For this we have to introduce several notations: for any $n\ge 1$ and for any $i\in\{1,\ldots, n\}$ we define two maps $\delta_i^0$ and $\delta_i^1$ from $X^n$ to $X^{n-1}$ by:
\begin{eqnarray*}
\delta_i^0(x_1,\ldots,x_n)&=&(x_1,\ldots,x_{i-1},x_{i+1},\ldots,x_n),\\
\delta_i^1(x_1,\ldots,x_n)&=&(x_1\t x_i,\ldots,x_{i-1}\t x_i,x_{i+1},\ldots,x_n).
\end{eqnarray*}
The above identification of $B$ with $A\otimes TX$ given by $ae_{x_1}\cdots e_{x_n}\simeq a\otimes(x_1,\ldots,x_n)$ permits to promote $\delta_i^0$ and $\delta_i^1$ to $A$-linear endomorphisms of $B$:
\begin{eqnarray*}
\delta_i^0(ae_{x_1}\cdots e_{x_n})&=&ae_{x_1}\dots e_{x_{i-1}}e_{x_{i+1}}\cdots e_{x_n}\hbox{ if }i\le n,\\
	&=& 0\hbox{ if }i>n,\\
\delta_i^1(ae_{x_1}\cdots e_{x_n})&=&ax_ie_{x_1\t x_i}\dots e_{x_{i-1}\t x_i}e_{x_{i+1}}\cdots e_{x_n}\hbox{ if }i\le n,\\
	&=& 0\hbox{ if }i>n.
\end{eqnarray*}
A straightforward computation using the rack axioms shows:
\begin{equation}\label{cube-sets}
\delta_i^\varepsilon\delta_{j}^\eta=\delta_{j-1}^\eta\delta_i^\varepsilon
\end{equation}
for any $i<j$ and for any $\varepsilon,\eta\in\{0,1\}$. Identities \eqref{cube-sets} are the defining axioms for $\square$-sets \cite[Paragraph 3.1]{Cl}. Note that the boundary \eqref {bord-racks} can be rewritten as:
\begin{equation}
\partial=\sum_{i\ge 1} (-1)^i(\delta_i^0-\delta_i^1).
\end{equation}
For any finite subset $A$ of $\mathbb N_{>0}=\{1,2,3\ldots\}$ and for $\varepsilon\in\{0,1\}$, we denote by $\delta_A^\varepsilon$ the composition of the maps $\delta_a^\varepsilon$ for $a\in A$ displayed in the increasing order.
\begin{prop}\label{coprod-explicit}
\begin{equation}\label{coprod-formula}
\Delta(e_{x_1}\cdots e_{x_n})=\sum_{A\subset\{1,\ldots,n\}}\epsilon(A)\delta_A^0(e_{x_1}\cdots e_{x_n})\otimes \delta_{A^c}^1(e_{x_1}\cdots e_{x_n}),
\end{equation}
where $A^c$ is the complement set of $A$ in $\{1,\ldots,n\}$, and where $\epsilon(A)$ is the signature of the unshuffle permutation of $\{1,\ldots,n\}$ which puts $A$ on the left and $A^c$ on the right\footnote{The signature $\epsilon(A)$ differs from that given in \cite[Paragraph 3.1]{Cl} in order to incorporate the extra sign $(-1)^{km}$ therein.}.
\end{prop}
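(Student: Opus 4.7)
The plan is to proceed by induction on $n$. The base case $n=1$ is simply the definition $\Delta(e_{x_1})=e_{x_1}\otimes x_1+1\otimes e_{x_1}$, matching the two subsets $A=\{1\}$ and $A=\emptyset$. For the inductive step, write
\[
\Delta(e_{x_1}\cdots e_{x_n})=\Delta(e_{x_1}\cdots e_{x_{n-1}})\cdot(e_{x_n}\otimes x_n+1\otimes e_{x_n}),
\]
substitute the induction hypothesis, and match the resulting expansion against the right-hand side of \eqref{coprod-formula}. Subsets $A\subset\{1,\ldots,n\}$ split into two classes according to whether $n\notin A$ or $n\in A$, in bijection with $A'\subset\{1,\ldots,n-1\}$ via $A=A'$ or $A=A'\cup\{n\}$ respectively; these two cases correspond to the two summands of $\Delta(e_{x_n})$.

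Let $E=e_{x_1}\cdots e_{x_n}$ and $E'=e_{x_1}\cdots e_{x_{n-1}}$. The core of the calculation is to establish four identities, valid for any $A'\subset\{1,\ldots,n-1\}$:
\[
\delta_{A'}^0(E)=\delta_{A'}^0(E')\cdot e_{x_n},\quad \delta_{A'\cup\{n\}}^0(E)=\delta_{A'}^0(E'),
\]
\[
\delta_{A'}^1(E)=\delta_{A'}^1(E')\cdot e_{x_n},\quad \delta_{A'\cup\{n\}}^1(E)=\delta_{A'}^1(E')\cdot x_n.
\]
The first three follow transparently from the description $B\cong A\otimes TX$ in Lemma \ref{Bvscomplejo}, which makes it clear that $\delta_j^\varepsilon$ for $j\le n-1$ commutes with right multiplication by $e_{x_n}$. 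The fourth is the main technical point: one first observes that $\delta_n^1(E)=x_n e_{x_1\t x_n}\cdots e_{x_{n-1}\t x_n}=E'\cdot x_n$, by using the defining relation $e_yx=xe_{y\t x}$ to push $x_n$ to the left, and then that $\delta_j^1$ commutes with right multiplication by $x_n$ for $j\le n-1$. This last commutativity rests on the rack axiom R2 in the form $(x_i\t x_j)\t x_n=(x_i\t x_n)\t(x_j\t x_n)$ together with the relation $yx^y=xy$ in $A$.

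Expanding the product $\Delta(E')\cdot(e_{x_n}\otimes x_n+1\otimes e_{x_n})$ by means of the four identities above yields, for each $A'\subset\{1,\ldots,n-1\}$, two terms; a Koszul sign $(-1)^{|A'|}$ appears in the term coming from $e_{x_n}\otimes x_n$, since $\delta_{(A')^c}^1(E')$ has degree $|A'|$. The remaining step is to match these signs with $\epsilon(A)$ in the two cases. A direct inversion count shows $\mathrm{sgn}(\sigma)=\mathrm{sgn}(\sigma')$ when $n\notin A$, and $\mathrm{sgn}(\sigma)=(-1)^{|A^c|}\mathrm{sgn}(\sigma')$ when $n\in A$, where $\sigma,\sigma'$ are the relevant unshuffles; combining this with the extra factor $(-1)^{|A||A^c|}$ built into the footnote convention for $\epsilon$, one obtains $\epsilon(A)=(-1)^{|A'|}\epsilon(A')$ when $n\notin A$ and $\epsilon(A)=\epsilon(A')$ when $n\in A$, exactly absorbing the Koszul sign in each case. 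The main obstacle is precisely this sign reconciliation together with the verification of the $\delta^1$ commutation identity; once these are in place, the remainder of the proof is a mechanical expansion.
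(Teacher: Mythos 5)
Your proof is correct and follows essentially the same route as the paper: induction on $n$ via $\Delta(e_{x_1}\cdots e_{x_n})=\Delta(e_{x_1}\cdots e_{x_{n-1}})(e_{x_n}\otimes x_n+1\otimes e_{x_n})$, splitting subsets according to whether $n\in A$. The paper's proof is just this expansion written in one chain of equalities; your four $\delta$-identities (including the use of R2 and $yx^y=xy$ for the $\delta^1$ commutation) and the sign recursion $\epsilon(A)=(-1)^{|A'|}\epsilon(A')$ versus $\epsilon(A)=\epsilon(A')$ are exactly the details it leaves implicit, and they check out.
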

\begin{proof}
We proceed by induction on $n$, the case $n=1$ being immediate.
\begin{eqnarray*}
\Delta(e_{x_1}\cdots e_{x_n})&=&\Delta(e_{x_1}\cdots e_{x_{n-1}})\Delta(e_{x_n})\\
&=&\Delta(e_{x_1}\cdots e_{x_{n-1}})(e_{x_n}\otimes x_n+1\otimes e_{x_n})\\
&=&\left(\sum_{B\subset\{1,\ldots,n-1\}}\epsilon(B)\delta_B^0(e_{x_1}\cdots e_{x_{n-1}})\otimes \delta_{B^c}^1(e_{x_1}\cdots e_{x_{n-1}})
\right)(e_{x_n}\otimes x_n+1\otimes e_{x_n})\\
&=&\sum_{B\subset\{1,\ldots,n-1\}}(-1)^{\vert B\vert}\epsilon(B)\delta_B^0(e_{x_1}\cdots e_{x_{n-1}})e_{x_n}\otimes \delta_{B^c}^1(e_{x_1}\cdots e_{x_{n-1}})x_n\\
&&+\sum_{B\subset\{1,\ldots,n-1\}}\epsilon(B)\delta_B^0(e_{x_1}\cdots e_{x_{n-1}})\otimes \delta_{B^c}^1(e_{x_1}\cdots e_{x_{n-1}})e_{x_n}\\
&=&\sum_{A\subset\{1,\ldots,n\}}\epsilon(A)\delta_A^0(e_{x_1}\cdots e_{x_n})\otimes \delta_{A^c}^1(e_{x_1}\cdots e_{x_n}).
\end{eqnarray*}
\end{proof}
\begin{cor}\label{clauwens}
The cup-product \eqref{cup} coincides with the cup-product given by F. J.-B. J. Clauwens in \cite[Equation (32)]{Cl}.
\end{cor}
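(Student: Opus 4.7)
The plan is to expand the cup-product explicitly via Proposition~\ref{coprod-explicit} and compare term by term with Clauwens' formula~\cite[Equation~(32)]{Cl}. Using Lemma~\ref{Bvscomplejo}, I view $f\in C^k(X,k)$ and $g\in C^m(X,k)$ as $A$-linear maps $B\to k$ supported in degree $k$ and $m$ respectively, and set $n=k+m$.

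First, substituting \eqref{coprod-formula} into $(f\smile g)(b)=i(f\ot g)(\Delta b)$ for $b=e_{x_1}\cdots e_{x_n}$, one observes that only subsets $A\subset\{1,\ldots,n\}$ with $|A|=m$ contribute, because otherwise $\delta_A^0(b)$ or $\delta_{A^c}^1(b)$ lies outside the support of $f$ or $g$. The Koszul rule in the definition of $i$ produces a global sign $(-1)^{mk}$.

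Next, for $A=\{j_1<\cdots<j_m\}$ and $A^c=\{i_1<\cdots<i_k\}$, I would unpack $\delta_A^0(b)$ and $\delta_{A^c}^1(b)$ explicitly under the identification $B\cong A\ot TX$. The first is immediately $1\ot(x_{i_1},\ldots,x_{i_k})$. For the second, a direct iterated computation (using the rack axiom R2 to commute successive actions, as already illustrated in the worked example following Theorem~\ref{thm:main}) produces a prefactor in $A$ multiplied by $1\ot(x'_{j_1},\ldots,x'_{j_m})$, where each $x'_{j_s}$ is obtained from $x_{j_s}$ by applying $\t x_i$ for every $i\in A^c$ with $i>j_s$, in increasing order of $i$. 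Since $g$ is $A$-linear and the $A$-action on $k$ is trivial, the prefactor drops out upon evaluation. The resulting pattern of iterated rack actions matches exactly the one appearing in Clauwens' formula, and the subsets $A$ of $\{1,\ldots,n\}$ with $|A|=m$ are precisely the indexing set of the sum in \cite[Equation~(32)]{Cl}.

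The main obstacle is the sign reconciliation: the unshuffle sign $\epsilon(A)$ and the Koszul sign $(-1)^{mk}$ must be matched with the sign convention of \cite{Cl}. By the footnote to Proposition~\ref{coprod-explicit}, $\epsilon(A)$ differs from Clauwens' signature by the factor $(-1)^{km}$, which precisely cancels the Koszul sign $(-1)^{mk}$ coming from $i$. After this cancellation the expression reproduces \cite[Equation~(32)]{Cl} term by term, which yields the corollary.
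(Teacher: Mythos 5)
Your proposal follows the same route as the paper: the paper's proof of Corollary~\ref{clauwens} consists of exactly the comparison you carry out, namely plugging the explicit coproduct formula \eqref{coprod-formula} of Proposition~\ref{coprod-explicit} into the definition \eqref{cup} and matching it against Clauwens' Equation~(32), declared ``immediate'' there. You simply make explicit the details the paper leaves unstated --- the degree count forcing $|A|=m$, the description of $\delta^1_{A^c}$ via axiom R2, the disappearance of the $A$-prefactor by $A$-linearity and triviality of the action, and the cancellation of the Koszul sign $(-1)^{mk}$ against the $(-1)^{km}$ absorbed into $\epsilon(A)$ as indicated in the footnote --- so the argument is correct and essentially identical to the paper's.
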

\begin{proof}
Considering our definition \eqref{cup}, this is immediate by comparing \eqref{coprod-formula} with Equation (32) defining the cup-product in \cite{Cl}.
\end{proof}
\section{Graded commutativity of the cup-product}
From Theorem \ref{thm:main} we immediately get an associative graded product, still denoted by $\smile$, on the rack cohomology space $H^\bullet(X)$. The graded commutativity is granted from Corollary \ref{clauwens}: in fact the product defined by Clauwens in \cite{Cl} is (super) commutative, since it is the cohomology of a topological space. We give here a direct algebraic proof based on a homotopy argument. Graded commutativity is illustrated by the following low-degree computation:
\begin{example}
Let $f,g\in C ^1(X,k)$, we identify them with degree one $A$-linear maps from $B$ to $k$
(also denoted  by $f$ and $g$) determined by the values
$f(e_x):=f(x)$ and $g(e_x):=g(x)$ for $x\in X$. We have $(f\smile g)\in C^2(X,k)$,
and it is defined by
\[(f\smile g)(e_xe_y)=
(f\ot g)\Delta(e_xe_y)
=(f\ot g)(
e_xe_y\ot xy+1\ot e_xe_y+
e_x\ot xe_y
-e_y\ot ye_{x^y})
\]
(see example \ref{exDelta} for the comultiplication of $e_xe_y$).
 Since $f$ and $g$ are of degree one, they vanish on $e_xe_y$ and on 
elements of $A$, so the only remaining terms are
\[
-f(e_x)g( xe_y)
+f(e_y)g( ye_{x^y})=
-f(e_x)g( e_y)
+f(e_y)g(e_{x^y})
\]
since $g$ (and $f$) is left $A$-linear and the action of $x$ and $y$ on $k$ is trivial.
One sees with this example that the product is in general  not commutative. In fact, it is commutative if and only if
the rack is trivial: in such a case, the product agrees with the shuffle product on the (graded) dual of
the tensor algebra.
On the other hand, if $f$ and $g$ are 1-cocycles, the condition $\partial ^*f=0$ means
exactly
$f(e_x)=f(e_{x^y})$ for all $x$ and $y$, so we see that this product restricted to 1-cocycles
is commutative.
\end{example}
\begin{lem}\label{com-one}
Let $h:B\to B\ot^D B$ the degree one $A$-module morphism defined as follows:
\begin{eqnarray*}
h(1)&:=&0,\\
h(e_x)&:=&e_x\otimes e_x,\\
h(e_{x_1}\cdots e_{x_n})&:=&\sum_{i=1}^n (-1)^{i-1}(\tau\circ\Delta)(e_{x_1})\cdots (\tau\circ\Delta)(e_{x_{i-1}})h(e_{x_i})\Delta(e_{x_{i+1}})\cdots\Delta(e_{x_n}),
\end{eqnarray*}
where $\tau:B\ot^D B\to B\ot^D B$ is the signed flip of the two factors. Then for any $a,b\in B$ we have:
\begin{equation}\label{eq:com-one}
h(ab)=h(a)\Delta(b)+(-1)^{|a|}(\tau\circ\Delta)(a)h(b).
\end{equation}
\end{lem}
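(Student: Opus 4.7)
The plan is to exploit the recursive structure built into the definition of $h$ itself, together with the multiplicativity of $\Delta$ and $\tau\circ\Delta$ as maps $B\to B\otimes B$. Multiplicativity of $\Delta$ comes from the bialgebra structure of $B$ established in Theorem \ref{dgb}, and multiplicativity of $\tau\circ\Delta$ follows because the signed flip $\tau$ is a graded algebra automorphism of $B\otimes B$ with Koszul-twisted multiplication. These two facts will let the Leibniz-style identity propagate along products of $e$-generators.

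The first concrete step is to peel off the $i=1$ summand from the defining formula for $h$ and re-index the remaining terms, which yields
\[
h(e_x\cdot w)=h(e_x)\Delta(w)-(\tau\circ\Delta)(e_x)h(w)
\]
for every pure $e$-word $w=e_{x_2}\cdots e_{x_n}$. This is exactly \eqref{eq:com-one} in the case $a=e_x$. Next I would prove \eqref{eq:com-one} for arbitrary pure $e$-words $a=e_{y_1}\cdots e_{y_m}$ and arbitrary pure $e$-word $b$ by induction on $m$, the case $m=0$ being immediate since $h(1)=0$ and $\Delta(1)=1\otimes 1$. For the inductive step, write $a=e_{y_1}\cdot a'$, apply the peeled-off recursion to $w=a'b$, and substitute the inductive hypothesis for $h(a'b)$. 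Regrouping the four resulting terms using multiplicativity of $\Delta$ and $\tau\circ\Delta$ reassembles them into $h(e_{y_1}\cdot a')\Delta(b)+(-1)^m(\tau\circ\Delta)(e_{y_1}\cdot a')h(b)$: the sign $(-1)^{m-1}$ inherited from the hypothesis combines with the $-1$ produced by the peeled-off recursion to give the required $(-1)^m=(-1)^{|a|}$.

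The final step is to extend from pure $e$-words to arbitrary $a,b\in B$. Using the decomposition $B\cong A\otimes TX$ of Lemma \ref{Bvscomplejo}, every element of $B$ is an $A$-linear combination of pure $e$-words on the left, and the defining relation $e_xy=ye_{x^y}$ allows any interior $A$-factor to be pushed to the outer left. Since $h$ is $A$-linear by construction and, because $A$-elements are group-like in $B$, both $\Delta(\alpha)=\alpha\otimes\alpha$ and $(\tau\circ\Delta)(\alpha)=\alpha\otimes\alpha$ for $\alpha\in A$, both sides of \eqref{eq:com-one} are $A$-equivariant with respect to the diagonal action on $B\otimes^D B$. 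This reduces the general case to the pure $e$-word case already handled. I expect the main obstacle to be not conceptual but notational: one must track Koszul signs carefully when $\tau$ is applied and when $A$-elements slide past $e$-generators, so that every $(\tau\circ\Delta)$-factor ends up in the correct position relative to the $h$- and $\Delta$-factors in the expansion.
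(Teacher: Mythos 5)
Your proposal is correct and follows essentially the same route as the paper: reduce to pure $e$-words and exploit that $\Delta$ and $\tau\circ\Delta$ are algebra morphisms, your induction on the length of $a$ being just an unpacking of the paper's direct splitting of the defining sum for $h(e_{x_1}\cdots e_{x_{p+q}})$ into the terms $i\le p$ and $i>p$. Your closing remarks on sliding $A$-factors past $e$-generators via $e_xy=ye_{x^y}$ address the reduction to pure words, which the paper leaves implicit, so no gap remains.
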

\begin{proof}
It is sufficient to check \eqref{eq:com-one} on $a=e_{x_1}\cdots e_{x_p}$ and $b=e_{x_{p+1}}\cdots e_{x_{p+q}}$. This is immediate form the definition, using the fact that both $\Delta$ and $\tau\circ\Delta$ are algebra morphisms.
\end{proof}
\noindent For example, an easy computation gives:
\begin{equation}\label{exey}
h(e_xe_y)=(xe_y+e_x)\otimes e_xe_y-e_xe_y\otimes(e_xy+e_y).
\end{equation}
\ignore{
\begin{cor}\label{com-two}
For any $a_1,\ldots,a_n\in B$ the following identity holds:
\begin{equation}\label{eq:com-two}
h(a_1\cdots a_n)=\sum_{i=1}^n(\tau\circ\Delta)(a_1)\cdots(\tau\circ\Delta)(a_{i-1})h(a_i)\Delta(a_{i+1})\cdots\Delta(a_n).
\end{equation}
\end{cor}
}
\begin{prop}\label{homotopy}
The map $h$ is a homotopy between $\Delta$ and $\tau\circ\Delta$, i.e. the following holds:
\begin{equation}\label{eq:homotopy}
d\circ h+h\circ d=\tau\circ\Delta-\Delta.
\end{equation}
\end{prop}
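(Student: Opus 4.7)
The plan is to reduce the identity \eqref{eq:homotopy} to a verification on the algebra generators of $B$, by observing that both sides are ``twisted derivations'' in the following sense: call a $k$-linear map $P:B\to B\ot^D B$ a twisted derivation if $P(ab)=P(a)\Delta(b)+(\tau\circ\Delta)(a)P(b)$. Since $\Delta$ and $\tau\circ\Delta$ are both algebra morphisms $B\to B\ot B$ (with $\tau$, the symmetry of the graded tensor product, itself an algebra morphism), the right-hand side $G:=\tau\circ\Delta-\Delta$ is a twisted derivation, because
\[
G(ab)=\tau\Delta(a)\tau\Delta(b)-\Delta(a)\Delta(b)=G(a)\Delta(b)+\tau\Delta(a)G(b).
\]

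The heart of the argument is to show that $F:=d\circ h+h\circ d$ is a twisted derivation as well. This combines Lemma \ref{com-one} (the mixed derivation identity for $h$), the graded Leibniz rule for $d$ on $B$ and its extension to $B\ot B$, the fact that $d\circ\Delta=\Delta\circ d$ (since $d$ is a coderivation), and the analogous relation $d\circ\tau=\tau\circ d$ on $B\ot B$, which follows from a short Koszul-sign check. Expanding $d(h(ab))$ and $h(d(ab))$ with these rules, using $|h(a)|=|a|+1$, the terms involving $h(a)\Delta(db)$ on the one hand and $\tau\Delta(da)\,h(b)$ on the other cancel in pairs between $dh(ab)$ and $hd(ab)$, leaving exactly $F(ab)=F(a)\Delta(b)+\tau\Delta(a)F(b)$.

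Since $B$ is generated as an algebra by $X\cup\{e_x:x\in X\}$, and a twisted derivation is determined by its values on algebra generators by the very shape of the defining identity, the difference $F-G$ vanishes on $B$ as soon as it vanishes on these generators. For $x\in X$, both $h$ and $d$ annihilate $x$ ($h$ by left $A$-linearity together with $h(1)=0$, and $d$ by definition), and $\tau\Delta(x)=x\ot x=\Delta(x)$, so $F(x)=G(x)=0$. For $e_x$ one computes $dh(e_x)=d(e_x\ot e_x)=(1-x)\ot e_x-e_x\ot(1-x)$ and $hd(e_x)=h(1-x)=0$, which is then compared termwise with $\tau\Delta(e_x)-\Delta(e_x)=(x\ot e_x+e_x\ot 1)-(e_x\ot x+1\ot e_x)$. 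The main obstacle is the careful sign bookkeeping in the middle step: one must track every Koszul factor arising from $|h(a)|=|a|+1$, the Leibniz rule for $d$, and the swap $\tau$, and check that all the signed cross-terms conspire to produce \emph{exactly} the twisted derivation relation satisfied by $G$, with no stray factor of $\pm 1$.
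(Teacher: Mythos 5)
Your argument is, in substance, the same as the paper's: the paper also verifies \eqref{eq:homotopy} on the generators $x$ and $e_x$ and then propagates it to products by induction on degree, using Lemma \ref{com-one}, the graded Leibniz rule for $d$, and the fact that $\Delta$ and $\tau\circ\Delta$ commute with the differentials; your ``twisted derivation'' formulation is just a clean repackaging of that induction, and the cancellation of the cross-terms $h(a)\,d\Delta(b)$ against $h(a)\Delta(db)$, and $d\bigl(\tau\Delta(a)\bigr)h(b)$ against $\tau\Delta(da)h(b)$, is exactly the computation carried out in the paper.

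There is, however, one concrete point that you leave unresolved and that, as written, does not close: the base case on $e_x$. With the usual Koszul convention $d(u\otimes v)=du\otimes v+(-1)^{|u|}u\otimes dv$ on $B\ot^D B$ --- the convention your formula $dh(e_x)=(1-x)\otimes e_x-e_x\otimes(1-x)$ reflects --- one gets $(dh+hd)(e_x)=1\otimes e_x-x\otimes e_x-e_x\otimes 1+e_x\otimes x=(\Delta-\tau\circ\Delta)(e_x)$, which is the \emph{negative} of the stated right-hand side; so the ``termwise comparison'' you only announce would reveal a global sign discrepancy rather than confirm \eqref{eq:homotopy}. The paper's computation uses the opposite normalization, $d(e_x\otimes e_x)=(x-1)\otimes e_x-e_x\otimes(x-1)$, i.e.\ $d(u\otimes v)=(-1)^{|v|}du\otimes v+u\otimes dv$, under which the base case does give $(\tau\circ\Delta-\Delta)(e_x)$. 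Since $dh+hd$ and $\pm(\tau\circ\Delta-\Delta)$ are all twisted derivations in your sense, your propagation step is valid under either convention; the upshot is that, with the convention you used, you prove $dh+hd=\Delta-\tau\circ\Delta$, which is still a homotopy between $\Delta$ and $\tau\circ\Delta$ and suffices for Theorem \ref{cup-product}, but is not literally \eqref{eq:homotopy}. To finish, fix the sign convention on $B\ot^D B$ (or replace $h$ by $-h$) and actually carry out the generator comparison so that the base case matches the identity you then propagate.
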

\begin{proof}
If $x$ is a degree zero generator of $B$ we have $(dh+hd)(x)=0$, and $\Delta(x)=x\otimes x=(\tau\circ\Delta)(x)$, hence \eqref{eq:homotopy} holds. Now for the degree one generator $e_x$ we have:
\begin{eqnarray*}
(dh+hd)(e_x)&=&d(e_x\otimes e_x)+0\\
&=&(x-1)\otimes e_x-e_x\otimes (x-1)\\
&=&x\otimes e_x-e_x\otimes x-1\otimes e_x+e_x\otimes 1\\
&=&(\tau\circ\Delta-\Delta)(e_x).
\end{eqnarray*}
The proof can then be carried out by induction on the degree, using \eqref{eq:com-one}:
\begin{eqnarray*}
(dh+hd)(ab)&=&d\big(h(a)\Delta(b)+(-1)^{|a|}(\tau\circ \Delta)(a)h(b)\big)+h\big(da.b+(-1)^{|a|}a.db\big)\\
&=&dh(a)\Delta(b)+(-1)^{|a|+1}h(a)d\Delta(b)+(-1)^{|a|}d(\tau\circ\Delta)(a)h(b)+(\tau\circ\Delta)(a)dh(b)\\
&&+hd(a)\Delta(b)+(-1)^{|a|+1}(\tau\circ\Delta)(da)h(b)+(-1)^{|a|}h(a)\Delta(db)+(\tau\circ\Delta)(a)hd(b)\\
&=&(\tau\circ\Delta-\Delta)(a)\Delta(b)+(\tau\circ\Delta)(a)(\tau\circ\Delta-\Delta)(b)\\
&=&(\tau\circ\Delta)(a)(\tau\circ\Delta)(b)-\Delta(a)\Delta(b)\\
&=&(\tau\circ\Delta-\Delta)(ab).
\end{eqnarray*}
\end{proof}
\begin{thm}\label{cup-product}
The map $h$ induces a homotopy between $\smile$ and $\smile^{op}$ in $C^\bullet(X)$, in particular, this gives an algebraic proof of the fact that $H^\bullet(X)$ is graded commutative.
\end{thm}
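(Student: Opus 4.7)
The plan is to transport the chain homotopy from Proposition \ref{homotopy} to the cochain complex by dualization. For cocycles $f\in C^p(X,k)$ and $g\in C^q(X,k)$, define
$$H(f\otimes g):=i(f\otimes g)\circ h.$$
The composition makes sense as an $A$-linear map $B\to k$ of degree $p+q-1$ because $h:B\to B\otimes^D B$ is $A$-linear by construction in Lemma \ref{com-one}, and $i(f\otimes g)$ lies in $\Hom_{A-}(B\otimes^D B,k)$, as established in the proof of Theorem \ref{thm:main}. Thus $H(f\otimes g)\in C^{p+q-1}(X,k)$, the correct degree for a cochain homotopy.

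Next I apply $i(f\otimes g)$ to both sides of $dh+hd=\tau\circ\Delta-\Delta$ and evaluate at $b\in B$. The right hand side equals $(f\smile^{op}g-f\smile g)(b)$, where $\smile^{op}$ denotes the product defined as in \eqref{cup} but using the opposite coproduct $\tau\circ\Delta$ in place of $\Delta$. A short sign computation, exploiting that only components of $\Delta b$ with factors of bidegree $(q,p)$ contribute and that $k$ is commutative, identifies $f\smile^{op}g$ with $(-1)^{|f||g|}\,g\smile f$. The left hand side, via the Leibniz rule for the tensor-product differential on $B\otimes B$ and the identification $\partial^*=\pm d^*$ from Lemma \ref{Bvscomplejo}, reassembles as $\pm\partial^*(H(f\otimes g))\pm H(\partial^{\otimes}(f\otimes g))$, where $\partial^{\otimes}$ is the standard differential on the tensor product of complexes. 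Since $f$ and $g$ are cocycles, $\partial^{\otimes}(f\otimes g)=0$ and the second summand drops out.

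Combining these, we get
$$(f\smile g)-(-1)^{|f||g|}(g\smile f)=\pm\,\partial^*\bigl(H(f\otimes g)\bigr),$$
a coboundary in $C^{p+q}(X,k)$. Passing to cohomology classes yields $[f\smile g]=(-1)^{|f||g|}[g\smile f]$ in $H^\bullet(X)$, which is exactly graded commutativity. The main obstacle, as usual in arguments of this type, is Koszul sign bookkeeping: one must verify precisely that $i(f\otimes g)\circ\tau\circ\Delta$ dualizes $\tau\circ\Delta$ with the correct sign $(-1)^{|f||g|}$, and that the $dh$ and $hd$ terms recombine into the chain-homotopy relation $\partial^*H\pm H\partial^{\otimes}=\smile-\smile^{op}$ on $C^\bullet\otimes C^\bullet$. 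Each verification is formal given the conventions, but requires attention at the interface between the grading of $h$ and the degrees of $f$ and $g$.
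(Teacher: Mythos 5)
Your proposal is correct and follows essentially the same route as the paper: dualize the chain homotopy $h$ of Proposition \ref{homotopy} by setting $H(f\otimes g)=\pm\,\mu\circ(f\otimes g)\circ h$, use $A$-linearity of $h$ to land in $C^{p+q-1}(X,k)$, identify the dual of $\tau\circ\Delta$ with $(-1)^{|f||g|}g\smile f$ via the bidegree/Koszul-sign observation, and conclude that the graded commutator of cocycles is a coboundary. The only cosmetic difference is that the paper states the homotopy identity at the level of all cochains and invokes the standard argument, whereas you specialize to cocycles from the start; the content is the same.
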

\begin{proof}
\noindent The cup-product of two cochains $f$ and $g$ is given by the convolution product:
$$f\smile g=\mu\circ (f\ot g)\circ\Delta$$
where $\mu$ is the multiplication in the base field $k$. Hence we have for any homogeneous $x\in B$ of degree $|f|+|g|$:
\begin{eqnarray*}
\big((f\smile g-(-1)^{|f||g|}g\smile f\big)(x)&=&\sum_{(x)}f(x_1)g(x_2)-(-1)^{|f||g|}g(x_1)f(x_2)\\
&=&\sum_{(x)}f(x_1)g(x_2)-(-1)^{|x_1||x_2|}g(x_1)f(x_2)\\
&=&\sum_{(x)}f(x_1)g(x_2)-(-1)^{|x_1||x_2|}f(x_2)g(x_1)\\
&=&\mu\circ (f\otimes g)\circ (\Delta-\tau\Delta)(x)\\
&=&-\mu\circ (f\otimes g)\circ (hd+dh)(x).
\end{eqnarray*}
Hence $H:\mop{Hom}_A(B,k)^{\ot 2}\to\mop{Hom}_A(B,k)$ defined by:
$$H(f\ot g):=-\mu\circ (f\otimes g)\circ h$$
is a homotopy between $\smile$ and $\smile\circ\tau$. A standard argument then gives us $\smile=\smile\circ\tau$ on the cohomology space.
\end{proof}
\section{Quandle cohomology}
\noindent If the rack $X$ is a quandle, then the complex $C(X,k)$ has a degeneration
subcomplex:
\[
 C_n^D(X,k)=\langle (x_1,\dots,x_{i-1},y,y,x_{i+2},\dots,x_n)\rangle
\]
and the appropriate complexes for computing quandle (co)homology  are defined by
\[
 C_\bullet^Q(X,k):=C_\bullet(X,k)/C_\bullet^D(X,k)\hbox{, and }
 C^\bullet_Q(X,k):=\Hom(C^Q_\bullet(X,k),k)
\]
Similarly to Lemma \ref{Bvscomplejo}, one can easily see the following:
\begin{lem}
Define the graded algebra $B^Q:=B/\langle e_x^2: x\in X\rangle$, then $B^Q$ inherits
the differential and there is an isomorphism of complexes
\[
 (C^Q_\bullet,\partial)\cong (k\ot_AB^Q_\bullet,\Id_k\ot_Ad)\hbox{, and }(C^\bullet,\partial^*)\cong (\Hom_{A-}(B^Q_\bullet,k),d^*)
\]
\end{lem}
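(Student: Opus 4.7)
The plan is to verify two things: (i) the two-sided ideal $I = \langle e_x^2 : x \in X\rangle$ of $B$ is stable under the differential $d$, so that $(B^Q,d)$ inherits the structure of a differential graded algebra; and (ii) the canonical-form isomorphism $B\cong A\ot_k TX$ from Lemma \ref{Bvscomplejo} identifies $I$ with $A\ot_k C^D_\bullet(X,k)$, from which both isomorphisms of complexes will follow.

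For (i), since $d$ is a super-derivation it is enough to check $d(e_x^2)=0$ for each generator. A direct computation gives
\[
d(e_x^2)=d(e_x)e_x-e_xd(e_x)=(1-x)e_x-e_x(1-x)=e_x x-xe_x,
\]
and the defining relation $e_x x = xe_{x^x}$ of $B$, combined with the quandle axiom $x^x=x$, forces $e_x x=xe_x$, so $d(e_x^2)=0$. This is the only place the quandle hypothesis is used, and the super-Leibniz rule then gives $d(I)\subseteq I$.

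For (ii), under the canonical form of Lemma \ref{Bvscomplejo} the submodule $A\ot_k C^D_\bullet$ corresponds precisely to the span of monomials $a\,e_{x_1}\cdots e_{x_{i-1}}e_y^2 e_{x_{i+2}}\cdots e_{x_n}$ in $B$, so the inclusion $A\ot C^D\subseteq I$ is immediate. For the reverse inclusion, a typical spanning element $c\cdot e_y^2\cdot d$ of $I$ is put in canonical form by iterating the relation $e_y z = z e_{y^z}$ to push the $A$-parts of $c$ and $d$ past $e_y^2$; because the rack action is applied uniformly to the two copies, the pair $e_y^2$ is carried to a pair $e_w^2$ at consecutive positions of the $TX$-factor, placing the result in $A\ot C^D$. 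Hence $I\cong A\ot C^D$ as $A$-submodules of $B$.

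With this identification, $k\ot_A B^Q\cong TX/C^D_\bullet = C^Q_\bullet$ as complexes, the differential being $\Id_k\ot_A d$, which matches $\partial$ by the same computation already made in Lemma \ref{Bvscomplejo}. Dualizing, $A$-linear maps $B^Q\to k$ correspond to $A$-linear maps $B\to k$ vanishing on $I$, that is, to cochains vanishing on $C^D$, which is exactly $C^\bullet_Q(X,k)$. The main obstacle will be the bookkeeping in the reverse inclusion $I\subseteq A\ot C^D$: one must verify that iterated commutations preserve the doubled-pair structure of $e_y^2$, which is the same uniformity phenomenon that made $d(e_x^2)=0$ work in part (i). Everything else is a formal consequence of Lemma \ref{Bvscomplejo}.
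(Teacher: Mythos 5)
Your proposal is correct and follows exactly the route the paper intends (it leaves the proof as "similar to Lemma \ref{Bvscomplejo}"): the quandle axiom gives $d(e_x^2)=e_xx-xe_x=0$, and the canonical-form isomorphism $B\cong A\ot TX$ identifies the ideal $\langle e_x^2\rangle$ with $A\ot C^D_\bullet$, from which both isomorphisms of complexes follow. Note only that the second isomorphism in the statement should read $C^\bullet_Q$ rather than $C^\bullet$, which is what your dualization argument in fact proves.
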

\ignore{
The problem is that the ideal $\langle e_x^2:x\in X\rangle$ is {\em not} a coideal, since
\[
 \Delta(e_x^2)
=\Delta(e_x)\Delta(e_x)
=
(e_x\ot x+1\ot e_x)(e_x\ot x+1\ot e_x)
=
e_x^2\ot x^2
+e_x\ot x e_x
+e_x\ot e_x x
+1\ot e_x^2
\]
In this context, it is convenient to introduce signs, and consider the super-version of $B$. Namely, since
$B$ is graded, one can consider the graded tensor product on $B\ot B$ defined by
\[
 (a\ot b)(a'\ot b')=(-1)^{|a'||b|}aa'\ot bb'
\]
if $a'$ and $b$ are homogeneous of degree
$|a'|$ and $|b|$ respectively. Notice that the only part of the algebraic structure on 
$B$ afected by this change is $\Delta$, and now we consider $\wt\Delta:B\to B\ot B$
in a similar fashion as before, namely
\[
 \wt\Delta(x)= \Delta(x)=x\ot x
\]
\[
 \wt\Delta(e_x)= \Delta(e_x)=e_x\ot x+1\ot e_x
\]
but we extend $\wt\Delta$ multiplicatively using the signed-product for $B\ot B$. All results on the previous section 
remain valid with apropriated introduction of signs. For example,
\[\wt\Delta(e_xe_y)=e_xe_y\ot xy+1\ot e_xe_y+
e_x\ot xe_y
-e_y\ot ye_{x^y}
\]
\[
\wt\Delta(e_xe_ye_ze_t)=
e_xe_y\ot xye_ze_t
+e_ze_t\ot zx e_x^{zt}e_y  ^{zt}
-e_x e_z\ot   xze_{y^z}e_t\]
\[
-e_xe_t   \ot xte_{y^t}e_{z^t}
+e_y e_z\ot yze_{x^{yz}}e_t
-e_y  e_t\ot yte_{x^{yt}}e_{z^t}+\cdots
\]
and for $f$ and $g$ in $C^2$, the formula for
$(f\smile g)(e_xe_ye_ze_t)$ modifies into
\[
f(e_xe_y)g(e_ze_t)+f(e_ze_t)g( e_x^{zt}e_y  ^{zt})
-f(e_x e_z)g(e_{y^z}e_t)\]
\[
-f(e_xe_t)g(e_{y^t}e_{z^t})
+f(e_y e_z)g(e_{x^{yz}}e_t)
-f(e_y  e_t)g(e_{x^{yt}}e_{z^t})
\]
}
\noindent The advantage of the signed-comultiplication is that 
\[\wt\Delta(e_x^2)
=e_x^2\ot x^2+1\ot e_x^2+
e_x\ot xe_x
-e_x\ot xe_{x^x}
\]
\[=e_x^2\ot x^2+1\ot e_x^2
\]
and so, $\langle e_x^2:x\in X\rangle$ is a coideal in $(B,\wt\Delta)$, hence,
$B^Q$ inherits a comultiplication compatible with the differential, and
$C^Q_\bullet(X,k)$ is a differential {\em algebra}. From Equation \eqref{exey} with $x=y$, we immediately get that the homotopy $h$ gives rise to a homotopy $h^Q:B^Q\to B^Q\ot^D B^Q$. Hence the quandle cohomology is also a graded commutative algebra.
\section{Coefficients}
Given a rack $X$ and a set $Y$, one says that $Y$ has the structure of an $X$-set if there is given
a map
$
*:Y\times X\to Y       
 $
verifying:
\begin{itemize}
 \item $-*x:Y\to Y$ is a bijection for all $x\in X$,
\item $(y*x)*x'=(y*x')*(x\t x')$ for all $y\in Y$, $x,x'\in X$.
\end{itemize}

A first example is $Y=X$ with $*=\t$. A second example is $Y=\{1\}$ with $1*x=1$ for all $x\in X$. Given
$Y$ an $X$-set, then it is clear that the axioms of $X$ sets imply that $k[Y]$, the $k$-free module on $Y$,
is a right $A(X)$-module (for instance, $k\{1\}\cong k$). In fact, since multiplication by $x$ is a bijection on $Y$, one see that
$k[Y]$ is actually a module over $k[G_X]$.
 Notices that, for  $k[G_X]$, right modules are in correspondence with left modules. 
In \cite{Cl}, the author consider $X$-sets as ``coefficients''
for rack/quandle (co)homology.  In our setting, it is for free to consider right $A$-modules,
and define
\[
 C_\bullet(X,M)=M\ot_A B_\bullet
\]
and, for left  $A$-modules
\[
 C^\bullet(X,M)=\Hom_{A}(B_\bullet,M)
\]
Given $N$ and $M$ two left $A$-modules, the comultiplication in $B$, together with the diagonal
$A$-structure on $N\ot M$, denoted $M\ot^DN$, gives a map

\[
\xymatrix{
\ar@/_/[rrd]C^\bullet(X,M)\ot C^\bullet(X,M)\ar@{=}[r]&\Hom_{A-}(B,N)\ot \Hom_{A-}(B,M)\ar[r]^{i|}&
\Hom_{A-}(B\ot^DB,N\ot^DM)\ar[d]^{\Delta^*}\\
&&\Hom_{A-}(B,N\ot ^D\!M)
}\]
which is associative in an obvious way. Similarly  for $X$  a quandle, replacing $B$ by $B^Q$.


\begin{thebibliography}{99}

\bibitem[AG]{AG}N. Andruskiewitsch and M. Gra\~na, \textsl{From racks to pointed Hopf algebras},
Adv. Math. \textbf{178} No. 2, 177--€"243  (2003).

\bibitem[CJKLM]{CS}J. S. Carter, D. Jelsovsky, S. Kamada, L. Langford, M. Saito,
\textsl{State-sum invariants of knotted curves and surfaces from quandle cohomology},
Electron. Res. Announc. Am. Math. Soc. \textbf{5}, No 20,146--156 (1999).

\bibitem[C]{Cl}F. J.-B. J. Clauwens,
The algebra of rack and quandle cohomology,
J. Knot Theory Ramifications \textbf{20} No11, 1487--1535 (2011).

\bibitem[C1]{C1}{S. Covez},
\textsl{On the conjectural Leibniz cohomology for groups},
J. K-Theory \textbf{10}, issue 03, 519--563, Cambridge University Press (2012). 

\bibitem[C2]{C2}S. Covez,
\textsl{The local integration of Leibniz algebras},
Ann. Inst. Fourier \textbf{63} No 1, 1--35 (2013). 

\bibitem[FG]{FG}M. Farinati, J.-G. Galofre,
\textsl{A differential bialgebra associated to a set theoretical solution of the Yang-Baxter equation}, 
J. Pure and Appl. Alg. \textbf{220}, Issue 10, 3454--3475 (2016).



\end{thebibliography}
\end{document}